\theoremstyle{plain}
\newtheorem{Thm}{Theorem}
\theoremstyle{definition}
\theoremstyle{remark}
\def\N{\mathbb N}
\def\F{\mathbb F}
\def\1{{\bf 1}}
\begin{document}
\title{An analogue of the prime number theorem for finite fields}
\author{Hao Pan}
\email{haopan1979@gmail.com}
\author{Zhi-Wei Sun}
\email{zwsun@nju.edu.cn}
\address{Department of Mathematics, Nanjing University,
Nanjing 210093, People's Republic of China}
\subjclass[2010]{Primary  11T06; Secondary 11B73}
\keywords{Irreducible polynomial, Prime number theorem, Stirling number of the second kind}
\begin{abstract}
We prove an analogue of the prime number theorem for finite fields.
\end{abstract}

\maketitle
Let $\pi(x)$ count all primes not exceeding $x$. The classical prime theorem asserts that
\begin{equation}\label{pnt}
\pi(x)=\frac{x}{\ln x}\cdot(1+o(1))
\end{equation}
as $x\to\infty$. In fact,  for every $m\geq 1$, we have the asymptotic expansion
\begin{equation}\label{pntg}
\pi(x)=\frac{x}{\ln x}+1!\cdot\frac{x}{(\ln x)^2}+2!\cdot\frac{x}{(\ln x)^3}+\cdots+m!\cdot\frac{x}{(\ln x)^m}+O_m\bigg(\frac{x}{(\ln x)^{m+1}}\bigg),
\end{equation}
where $O_m$ means that the implied constant only depends on $m$.

Suppose that $q$ is a power of a prime. Let $\F_q$ denote the finite field with $q$ elements.
Then the irreducible polynomials over $\F_q$ are correspond to the primes in $\N$. For $f(x)\in\F_q[x]$, define the norm of $f(x)$
$$
N(f)=q^{\deg f}.
$$
Let $\pi_q(x)$ count all monic irreducible polynomials in $\F_q[x]$ whose norm is not greater than $x$, i.e.,
$$
\pi_q(x)=\#\{f(x)\in\F_q[x]:\, f(x)\text{ is monic and irreducible, and }\deg f\leq\log_qx\}.
$$
It is well-know that the number of the monic irreducible polynomials in $\F[q]$ with  the degree $n$ is
$$
\frac1n\sum_{d\mid n}\mu\bigg(\frac nd\bigg)q^d,
$$
where $\mu$ is the M\"obius function. So substituting $x=q^n$, we have
$$
\pi_q(x)=\sum_{k=1}^n\frac1k\sum_{d\mid k}\mu\bigg(\frac kd\bigg)q^d.
$$

In \cite{KS90},  Kruse and Stichtenoth proved that
\begin{equation}\label{qpnt}
\pi_q(x)=\frac{q}{q-1}\cdot\frac{x}{\log_q x}\cdot(1+o(1))
\end{equation}
as $x=q^n\to\infty$. Note that
$$
\lim_{q\to 1}\frac{q}{q-1}\cdot\frac{x}{\log_q x}=\frac{x}{\ln x}\lim_{q\to 1}\frac{q}{q-1}\cdot\ln q=\frac{x}{\ln x}.
$$
So (\ref{qpnt}) can be viewed as a $q$-analogue of (\ref{pnt}). Subsequently, Wang and Kan \cite{WK10} obtained the following extension of  (\ref{qpnt}):
\begin{equation}\label{pnte}
\pi_q(x)=\frac{q}{q-1}\cdot\frac{x}{\log_q x}+\frac{q}{(q-1)^2}\cdot\frac{x}{(\log_q x)^2}+O\bigg(\frac{x}{(\log_q x)^3}\bigg),
\end{equation}
where $x=q^n\to\infty$.

On the other hand, supposing that $p$ is prime and  $x=p^{n-1}$, Pollack \cite{P10} gave an asymptotic expansion for $\pi_p(x)$:
\begin{equation}
\pi_p(x)=\frac{p}{p-1}\cdot\frac{x}{n}+\sum_{j=2}^m\frac{A_{p,j}x}{n^j}+O\bigg(\frac{\sqrt{x}}{n}+\frac{A_{p,m+2}x}{n^{m+1}}+\sum_{j=1}^m\frac{A_{p,j}}{n}\bigg),
\end{equation}
where the implied constant in $O$  is absolute and
$$
A_{p,j}=\sum_{k=1}^\infty\frac{k^{j-1}}{p^{k-1}}.
$$
Of course, as Pollack had pointed out, his discussions can be easily generalized for $\pi_q(x)$. However, since those $A_{p,j}$ are also infinite series, it is not convenient to compute.

So in this short note, we shall give another asymptotic expansion of $\pi_q(x)$, which can be viewed as a $q$-analogue of (\ref{pnte}).
Define the polynomial
$$
{\mathcal S}_n(x)=\sum_{k=0}^n(-1)^{j-k}k!S(n,k)x^k,
$$
where the Stirling number of the second kind $S(n,k)$ is given by
$$
t^n=\sum_{k=0}^nk!S(n,k)\binom{t}{k}.
$$
\begin{Thm}
Suppose that $q$ is a power of a prime and $x$ is a power of $q$. Then
\begin{equation}\label{mi}
\pi_q(x)=\frac{q}{q-1}\cdot\frac{x}{\log_q x}+\frac{1}{q-1}\sum_{j=1}^{m-1}{\mathcal S}_j\bigg(\frac{q}{q-1}\bigg)\cdot\frac{x}{(\log_q x)^{j+1}}+R_m,
\end{equation}
where
$$
|R_m|\leq\frac{11\sqrt{x}}{\log_q x}+m!\bigg(\frac{3^{m+3}x}{q(\log_q x)^{m+1}}+\frac{2^{m+1}}{\log_q x}\bigg).
$$
\end{Thm}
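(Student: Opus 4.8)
The plan is to isolate the principal part of the divisor sum, expand it by reversing the order of summation, and read off the coefficients from the generating function of the Stirling numbers; the error term $R_m$ then breaks into three explicitly identifiable remainders. Throughout write $n=\log_q x$, so that $\pi_q(x)=\sum_{k=1}^{n}\frac1k\sum_{d\mid k}\mu(k/d)q^d$.

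First I would peel off the principal divisor $d=k$:
$$
\pi_q(x)=\sum_{k=1}^{n}\frac{q^k}{k}+\sum_{k=1}^{n}\frac1k\sum_{\substack{d\mid k\\ d<k}}\mu\Big(\frac kd\Big)q^d .
$$
In the second sum every proper divisor satisfies $d\le k/2$, so $q^d\le q^{k/2}$ and the inner sum is dominated by a geometric progression; hence this double sum is $O(\sqrt x/n)$, and careful bookkeeping pins the constant, giving the $11\sqrt x/\log_q x$ in $R_m$. It remains to analyse the principal sum $T:=\sum_{k=1}^{n}q^k/k$.

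Reversing the index by $k=n-\ell$ gives $T=\frac{q^n}{n}\sum_{\ell=0}^{n-1}\frac{q^{-\ell}}{1-\ell/n}$, and I expand
$$
\frac1{1-\ell/n}=\sum_{j=0}^{m-1}\Big(\frac\ell n\Big)^{j}+\frac{(\ell/n)^m}{1-\ell/n}.
$$
Substituting and interchanging the finite sums writes $T$ as $\sum_{j=0}^{m-1}\frac{q^n}{n^{j+1}}\sum_{\ell=0}^{n-1}\ell^j q^{-\ell}$ plus a remainder from the last term. I then replace each inner sum by the full series $\sum_{\ell=0}^{\infty}\ell^j q^{-\ell}$, absorbing the tail $\sum_{\ell\ge n}\ell^j q^{-\ell}$ into the error. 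The coefficients are evaluated with the Stirling identity: from $\ell^j=\sum_k k!\,S(j,k)\binom{\ell}{k}$ and $\sum_{\ell\ge0}\binom{\ell}{k}y^\ell=y^k/(1-y)^{k+1}$ with $y=1/q$, one gets
$$
\sum_{\ell=0}^{\infty}\ell^j q^{-\ell}=\sum_{k=0}^{j}k!\,S(j,k)\frac{q}{(q-1)^{k+1}} .
$$
For $j=0$ this equals $q/(q-1)$, producing the main term $\frac{q}{q-1}\cdot\frac{x}{n}$; for $j\ge1$ a short algebraic manipulation rewrites it as $\frac1{q-1}{\mathcal S}_j\!\big(\frac{q}{q-1}\big)$ (equivalently, under $u=q/(q-1)$ one proves the operator identity $\big(u(u-1)\frac{d}{du}\big)^j u=(u-1){\mathcal S}_j(u)$ by induction from $S(j,k)=kS(j-1,k)+S(j-1,k-1)$). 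This produces exactly the stated sum over $1\le j\le m-1$.

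The genuine obstacle is to collect the three remainders into the displayed bound for $R_m$ with the explicit constants. The tails $\frac{q^n}{n^{j+1}}\sum_{\ell\ge n}\ell^j q^{-\ell}$ are each $O(1/n)$ (write $\ell=n+s$ and bound $(n+s)^j$ against $n^j$ and $s^j$), and summing over $j$ yields the $m!\,2^{m+1}/\log_q x$ piece. The geometric remainder $\frac{q^n}{n}\sum_{\ell<n}q^{-\ell}(\ell/n)^m/(1-\ell/n)$ must be split at $\ell\approx n/2$: for small $\ell$ the factor $1/(1-\ell/n)$ is bounded and one gets a term of size $x/n^{m+1}$, namely the $m!\,3^{m+3}x/(q(\log_q x)^{m+1})$ piece, while the range $\ell$ near $n$, where $1/(1-\ell/n)$ blows up like $n$, is controlled by the decay $q^{-\ell}\le q^{-n/2}$ and merges into the $\sqrt x/n$ term. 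Both size estimates rest on uniform control of the Stirling sums $\sum_k k!\,S(j,k)(q-1)^{-k}$; using $q\ge2$, the bound $k!\,S(j,k)\le k^j$, and comparison with a geometric series is what generates the factor $m!$ together with the constants $3^{m+3}$ and $2^{m+1}$. Keeping the balance between the singular factor $1/(1-\ell/n)$ and the decay $q^{-\ell}$ explicit, rather than merely asymptotic, is the main technical care the bound on $R_m$ demands.
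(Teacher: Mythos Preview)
Your outline is essentially the paper's own proof: peel off the leading divisor, reverse the index of summation, expand $1/(1-\ell/n)$ as a finite geometric sum plus tail, extend the inner sums to infinite series, and evaluate those series via Stirling numbers. Two details differ from the paper, and one of them matters for the explicit constants.

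For the Stirling evaluation, the paper writes $k^j=(-1)^j(-k)^j$ and expands $(-k)^j=\sum_i i!\,S(j,i)\binom{-k}{i}$; the sign $(-1)^{j-i}$ in $\mathcal S_j$ then drops out automatically from $\binom{-k}{i}=(-1)^i\binom{k+i-1}{i}$, with no further identity needed. Your route via $\binom{\ell}{k}$ is correct but costs you the extra step of proving $u\sum_k k!\,S(j,k)(u-1)^k=(u-1)\mathcal S_j(u)$.

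More substantively, you propose to send the range $\ell\ge n/2$ of the tail remainder into the $\sqrt{x}/n$ term. The paper does \emph{not} do this: it writes $q^{-\ell}\le 1.3^{-n/2}(0.75q)^{-\ell}$ for $\ell\ge n/2$, bounds $\sum_\ell \ell^m(0.75q)^{-\ell}$ by $m!\,3^{m+1}/(0.75q)$, and uses $n/1.3^{n/2}\le 3$ to absorb the whole large-$\ell$ contribution into the $3^{m+3}m!\,x/(q\,n^{m+1})$ piece. In the paper's accounting the constant $3^{m+3}$ is precisely $2^{m+2}$ (small $\ell$) plus $2\cdot 3^{m+2}$ (large $\ell$), and the constant $11$ comes solely from the M\"obius estimate. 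If you reroute the large-$\ell$ part into $\sqrt{x}/n$ as you describe, you recover a valid bound but with different constants than those stated in the theorem.
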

\begin{proof}
For convenience, we write $f=\Theta_C(g)$ provided that $|f(n)|\leq C|g(n)|$ for every $n\geq 1$. It is not difficult to show that
$$
q^{k}-2q^{k/2}\leq\sum_{d\mid k}\mu\bigg(\frac kd\bigg)q^d\leq q^k.
$$
And since $q\geq 2$, using induction on $n$ and a simple calculation, we can easily deduce that
$$
\sum_{k=1}^n\frac{q^{k/2}}{k}\leq\frac{5.5q^{n/2}}{n}
$$
for every $n\geq 1$.
Hence we have
$$
\pi_q(x)=\sum_{k=1}^n\frac1k\sum_{d\mid k}\mu\bigg(\frac kd\bigg)q^d=
\sum_{k=1}^n\frac{q^k}k+
\Theta_{2}\bigg(\sum_{k=1}^n\frac{q^{k/2}}k\bigg)=
\sum_{k=1}^n\frac{q^k}k+
\Theta_{11}\bigg(\frac{q^{n/2}}{n}\bigg).$$

Evidently,
$$
\sum_{k=1}^n\frac{q^k}{k}=
\sum_{k=0}^{n-1}\frac{q^{n-k}}{n-k}
=\frac{q^n}{n}\sum_{k=0}^{n-1}\frac{q^{-k}}{1-k/n}\\
=\frac{q^n}{n}\sum_{k=0}^{n-1}\frac1{q^{k}}\sum_{j=0}^\infty\frac{k^j}{n^j}.
$$
We have
$$
\sum_{k=1}^{n-1}\frac1{q^{k}}\sum_{j=m}^\infty\frac{k^j}{n^j}=\frac{1}{n^m}\sum_{k=1}^{n-1}\frac{k^m}{q^k}\cdot\frac{1}{1-k/n}.
$$
Note that
$$
\sum_{k=1}^\infty\frac{k^m}{q^k}\leq \frac{1}{q}\sum_{k=0}^\infty\frac{(k+1)(k+2)\cdots(k+m)}{q^k}=\frac{m!}{q(1-1/q)^{m+1}}.
$$
Hence
$$
\sum_{1\leq k<n/2}\frac{k^m}{q^k}\cdot\frac{1}{1-k/n}\leq 2\sum_{k=1}^\infty\frac{k^m}{q^k}\leq\frac{2^{m+2}m!}{q}.
$$
Furthermore, we have
\begin{align*}
\sum_{n/2\leq k\leq n-1}\frac{k^m}{q^k}\cdot\frac{1}{1-k/n}\leq&n\sum_{n/2\leq k\leq n-1}\frac{k^m}{q^k}\leq \frac{n}{1.3^{n/2}}\sum_{n/2\leq k\leq n-1}\frac{k^m}{(0.75q)^k}\\
\leq&\frac{n}{1.3^{n/2}}\sum_{k=1}^{\infty}\frac{k^m}{(0.75q)^k}\leq\frac{n}{1.3^{n/2}}\cdot\frac{m!}{0.75q(1-1/(0.75q))^{m+1}}.
\end{align*}
It is easy to check that
$$
\bigg(\frac{0.75q}{0.75q-1}\bigg)^{m+1}\leq3^{m+1}\quad
\text{and}\quad
\frac{n}{1.3^{n/2}}\leq 3
$$
for every $q\geq 2$ and $n,m\geq 1$. So
$$
\sum_{k=1}^{n-1}\frac{k^m}{q^k}\cdot\frac{1}{1-k/n}\leq \frac{2^{m+2}m!}{q}+\frac{2\cdot 3^{m+2}m!}{q}\leq \frac{3^{m+3}m!}{q}.
$$
Thus
\begin{align*}
\sum_{k=1}^n\frac{q^k}{k}=&\frac{q^n}{n}\sum_{k=0}^{n-1}\frac1{q^{k}}\sum_{j=0}^{m-1}\frac{k^j}{n^j}+
\frac{q^n}{n}\sum_{k=1}^{n-1}\frac1{q^{k}}\sum_{j=m}^\infty\frac{k^j}{n^j}\\
=&\sum_{j=0}^{m-1}\frac{q^n}{n^{j+1}}\sum_{k=0}^{n-1}\frac{k^j}{q^{k}}+
\Theta_{1}\bigg(\frac{3^{m+3}m!q^{n-1}}{n^{m+1}}\bigg).
\end{align*}

Furthermore, we have
$$
\sum_{k=1}^{n-1}\frac{k^j}{q^{k}}=\sum_{k=1}^{\infty}\frac{k^j}{q^{k}}-\sum_{k=n}^{\infty}\frac{k^j}{q^{k}}=\sum_{k=1}^{\infty}\frac{k^j}{q^{k}}+\Theta_1\bigg(\frac{n^{j}}{q^n}\sum_{k=1}^{\infty}\frac{k^j}{q^{k-1}}\bigg)=\sum_{k=1}^{\infty}\frac{k^j}{q^{k}}+\Theta_{2}\bigg(\frac{n^{j}}{q^n}\cdot 2^jj!\bigg).
$$
So
\begin{align*}
&\frac{q^n}{n^{j+1}}\sum_{k=1}^{n-1}\frac{k^j}{q^{k}}=\frac{q^n}{n^{j+1}}\sum_{k=1}^{\infty}\frac{(-1)^j}{q^{k}}\cdot(-k)^j+\Theta_{2}\bigg(\frac{2^jj!}{n}\bigg)\\
=&\frac{q^n}{n^{j+1}}\sum_{k=1}^{\infty}\frac{(-1)^j}{q^{k}}\sum_{i=0}^ji!S(j,i)\binom{-k}{i}+\Theta_{2}\bigg(\frac{2^jj!}{n}\bigg)\\
=&\frac{q^{n-1}}{n^{j+1}}\sum_{i=0}^j(-1)^{j-i}i!S(j,i)\sum_{k=1}^{\infty}\frac{1}{q^{k-1}}\binom{i+k-1}{k-1}+\Theta_{2}\bigg(\frac{2^jj!}{n}\bigg)\\
=&
\frac{q^{n-1}}{n^{j+1}}\sum_{i=0}^j(-1)^{j-i}i!S(j,i)\cdot\frac{q^{i+1}}{(q-1)^{i+1}}+\Theta_{2}\bigg(\frac{2^jj!}{n}\bigg).
\end{align*}
Thus we get that
\begin{align*}
&\sum_{k=1}^n\frac{q^k}{k}=\frac{q^{n+1}}{n(q-1)}+\sum_{j=1}^{m-1}\frac{q^n}{n^{j+1}}\sum_{k=0}^{n-1}\frac{k^j}{q^{k}}+\Theta_{1}\bigg(\frac{3^{m+3}m!q^{n-1}}{n^{m+1}}\bigg)\\
=&\frac{q^{n+1}}{n(q-1)}+\sum_{j=1}^{m-1}\frac{q^{n}}{n^{j+1}}\sum_{i=0}^j\frac{(-1)^{j-i}i!S(j,i)q^{i}}{(q-1)^{i+1}}+\Theta_{1}\bigg(\frac{3^{m+3}m!q^{n-1}}{n^{m+1}}\bigg)+\Theta_{2}\bigg(\frac{2^mm!}{n}\bigg).
\end{align*}
This concludes our proof. 
\end{proof}

\end{document}